\documentclass[12pt]{article}

\usepackage{setup}
\bibliography{referenceimport}

\title{Braid groups and Burnside groups}
\author{Ethan Dlugie}
\date{July 2026}

\begin{document}

\maketitle

\begin{abstract}
    There exists an exceptional quotient of braid groups $\Br_4 \twoheadrightarrow \Br_3$ that is related to many interesting constructions in algebra, topology, and geometry. This quotient map also descends to a quotient of ``truncated'' braid groups $\Br_4(d) \twoheadrightarrow \Br_3(d)$, which have an added torsion relation on their half twist generators. In this article, we find a presentation for the kernel of this truncated quotient map that takes the form of what we deem a ``primitive'' Burnside group. We give a few finiteness results on these primitive Burnside groups. Our methods are purely group theoretic, but we comment on an interpretation involving Lefschetz fibrations at the end.
\end{abstract}

\section{Introduction}\label{sec: introduction}
The braid groups are a family of groups with wide application in topology, number theory, algebraic geometry, and more. The $n$-strand braid group for us will be defined by the presentation
\begin{equation*}
    \Br_n(d) = \left\langle \sigma_1,\dotsc,\sigma_{n-1} \mid \sigma_i\sigma_{i+1}\sigma_i = \sigma_{i+1}\sigma_i\sigma_{i+1}, \sigma_i\sigma_j=\sigma_j\sigma_i \text{ if } |i-j| > 1 \right\rangle.
\end{equation*}
It is known that the normal subgroup $\llangle \sigma_1\sigma_3^{-1} \rrangle \triangleleft \Br_4$ is isomorphic to a free group of rank 2 (with earliest written sources  \cite[Theorem 7]{Gassner1961OnGroups} and \cite[Theorem 2.6]{Gorin1969AlgebraicBraids}). A fun exercise shows that appending the relation $\sigma_1\sigma_3^{-1}=1$ to $\Br_4$ yields a presentation of $\Br_3$. Therefore we have a short exact sequence
\begin{equation}
    1 \to F_2 \to \Br_4 \xrightarrow{\psi} \Br_3 \to 1 \label{eqn:classis_SES}
\end{equation}
which splits by a standard inclusion of braid groups $\Br_3 \leq \Br_4$.

In this article, we are concerned with what we call \textit{truncated braid groups}. For positive integers $n$ and $d$, such a group is defined as $\Br_n(d) = \Br_n/\llangle \sigma_1^d \rrangle$, where $\llangle - \rrangle$ denotes normal closure. (Note that all $\sigma_i$ are conjugate in $\Br_n$.) These groups were first considered in \cite{Coxeter1959FactorGroup}. The quotient map $\psi$ descends to a quotient map of truncated braid groups:
\begin{equation*}
    \bar\psi:\Br_4(d) \twoheadrightarrow \Br_3(d).
\end{equation*}
For small values of $d$, these groups are finite, allowing for computer investigation. The author was surprised to see that the kernel of $\bar\psi$ in some cases was isomorphic to a free Burnside group of rank 2. Recall that the \emph{free Burnside group} of rank $n$ and exponent $d$ is defined by the presentation
\begin{equation*}
    F_n(d) = \langle x_1,\dotsc,x_n \mid w(x_1,\dotsc,x_n)^d=1 \text{ for all words } w\rangle.
\end{equation*}
The study of the free Burnside groups and their relatives, specifically questions about their finiteness, led to profound developments in algebra in the 20th century (see e.g. \cite{Olshanskii1991GeometryGroups}), including the work of Zelmanov that garnered a Fields medal in 1994.

To identify the kernel of $\bar \psi$ in all cases, we introduce a new construction in the world of Burnside-flavored groups.

\begin{definition}\label{def:primitive_Burnside}
    The \textit{primitive Burnside group} of rank $n$ and exponent $d$ is the group $PF_n(d)$ given by the presentation
    \begin{equation*}
        PF_n(d) = \langle x_1,\dotsc,x_n \mid w(x_1,\dotsc,x_n)^d=1 \text{ for all primitive words } w \rangle.
    \end{equation*}
\end{definition}
Recall here that a word in a free group is \textit{primitive} if it can be extended to a basis for the free group. For example $xy^3$ is primitive in $\langle x,y \rangle$, but $x^2y^2$ is not.

\begin{theorem}\label{thm:kernel_is_primitive_burnside}
    For all $d$, there is a short exact sequence
    \begin{equation*}
        1 \to PF_2(d) \to \Br_4(d) \xrightarrow[]{\bar\psi} \Br_3(d) \to 1
    \end{equation*}
    In fact, the sequence is split, and we have a semidirect product decomposition
    \begin{equation*}
        \Br_4(d) \approx PF_2(d) \rtimes \Br_3(d).
    \end{equation*}
\end{theorem}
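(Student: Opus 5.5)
Since the classical sequence \eqref{eqn:classis_SES} splits via the standard inclusion $\Br_3 \le \Br_4$, I will write $\Br_4 = F \rtimes \Br_3$, where $F = \llangle \sigma_1\sigma_3^{-1}\rrangle$ is free of rank two. For a basis of $F$ I take $x = \sigma_1\sigma_3^{-1}$ and $y = \sigma_2\sigma_1\sigma_3^{-1}\sigma_2^{-1}$, the standard choice from Gassner and Gorin. The basic principle is that a quotient of a semidirect product $F \rtimes H$ by the normal closure of a set $S \subseteq F\cup H$ can be computed piece by piece. Suppose $S_F \subseteq F$ is a set that, together with the relations in $S$, generates a subgroup $N_F \le F$ which is normal in $F$ and invariant under the conjugation action of $H$. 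Suppose also that $S_H \subseteq H$. Then
\[
(F\rtimes H)/\llangle S_F \cup S_H \rrangle \;\cong\; (F/N_F) \rtimes (H/\llangle S_H\rrangle_H),
\]
provided that $\llangle S_H\rrangle_H$ acts trivially on $F/N_F$. So I need to rewrite the single relation $\sigma_1^d = 1$ in a form adapted to the decomposition.

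\textbf{Step 1.} Since $\sigma_1 \in \Br_3$, the relation $\sigma_1^d = 1$ lies in the $\Br_3$ factor. However, its normal closure in $\Br_4$ also contains $\sigma_3^d$, which does not lie in $\Br_3$. I will write $\sigma_3 = x^{-1}\sigma_1$, so $\sigma_3^{-1} = \sigma_1^{-1}x$. Then $\sigma_3^d = (x^{-1}\sigma_1)^d$, and this expands as a product of the conjugates $\sigma_1^{k}x^{-1}\sigma_1^{-k}$ times $\sigma_1^d$. Modulo $\sigma_1^d$, this says that a certain word $x^{-1}\,(\sigma_1 x^{-1}\sigma_1^{-1})\cdots(\sigma_1^{d-1}x^{-1}\sigma_1^{1-d})$ is trivial in $F$.

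To understand this word, I will compute the action of $\sigma_1$ and $\sigma_2$ on $F$. The conjugation action of $\Br_3$ on $F$ is known to induce automorphisms whose images in $\operatorname{Out}(F_2)\cong GL_2(\mathbb Z)$ are the standard $SL_2(\mathbb Z)$ action. I expect to find that $\sigma_1$ acts by $x\mapsto x$ and $y \mapsto yx^{\pm1}$ (up to inner automorphisms). Since $\sigma_1$ commutes with $\sigma_1\sigma_3^{-1}$, it fixes $x$, so the word above collapses to $x^{-d}$. I then need to track the conjugates of $x^d$. Because the image of $\Br_3$ in $\operatorname{Aut}(F_2)$ contains representatives of all of $SL_2(\mathbb Z)$, and every primitive element of $F_2$ is conjugate to the image of $x^{\pm1}$ under some element of $SL_2(\mathbb Z)$ (up to the $\pm$ sign issues of $GL_2$ versus $SL_2$, handled by inverses), the $\Br_4$-normal closure of $x^d$ inside $F$ is exactly the normal closure in $F$ of all $w^d$ with $w$ primitive. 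Its quotient is therefore $F/N_F = PF_2(d)$.

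\textbf{Step 2.} I will show $\llangle\sigma_1^d\rrangle_{\Br_4} = N_F \cdot \llangle \sigma_1^d\rrangle_{\Br_3}$. The inclusion $\supseteq$ follows from Step 1. For $\subseteq$, I need the right-hand side to be normal in $\Br_4$, and the key check is that $F$ normalizes it. That reduces to showing $[F,\sigma_1^d] \subseteq N_F$, which in turn means that $\sigma_1^d$ acts trivially on $F/N_F$. Since $\sigma_1$ fixes $x$ and sends $y\mapsto yx^{\pm1}$ (up to conjugation), the element $\sigma_1^d$ sends $y$ to $yx^{\pm d}$ up to an inner automorphism, and this is $\equiv y$ modulo $N_F$. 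The inner part also needs care: I expect $\sigma_1$ to act by a genuine automorphism fixing $x$, in which case no inner twist arises. The same argument applies to every conjugate of $\sigma_1^d$ in $\Br_3$.

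\textbf{Conclusion and main obstacle.} With these steps, the principle gives $\Br_4(d)\cong PF_2(d)\rtimes\Br_3(d)$, and the induced map is $\bar\psi$, which yields both the short exact sequence and the splitting. I expect the main obstacle to be the precise computation of the action in Step 1. Specifically, I need to verify that the automorphisms induced by $\Br_3$, together with inner automorphisms, move $x$ to every primitive element up to inversion. This requires identifying the image as the full preimage of $SL_2(\mathbb Z)$ in $\operatorname{Aut}(F_2)$, modulo the center, and invoking Nielsen's classification of primitive elements of $F_2$ via their abelianizations.
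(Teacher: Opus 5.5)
Your proof is correct. It uses the same three ingredients as the paper: the splitting $\Br_4\approx F_2\rtimes\Br_3$, the explicit action of $\sigma_1$, and transitivity on conjugacy classes of primitive elements via $\operatorname{SL}(2,\mathbb Z)$ and Nielsen. You assemble them differently.

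The paper proves a general lemma (\cref{lemma:quotient_of_split_sequence}): for any split extension and any $N\triangleleft Q$, the kernel of $G/\llangle N\rrangle\to Q/N$ is $K/[K,N]$. It then \emph{derives} a presentation of $F_2/[F_2,N]$ by imposing $w=(g\sigma_1^dg^{-1}).w$ on the basis $\{g.x,g.y\}$, which collapses to $(g.x)^d=1$.

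You instead guess the answer $N_F$ in advance and verify $\llangle\sigma_1^d\rrangle_{\Br_4}=N_F\cdot\llangle\sigma_1^d\rrangle_{\Br_3}$ by two containments.
\begin{itemize}
\item Your inclusion $N_F\subseteq\llangle\sigma_1^d\rrangle_{\Br_4}$ comes from $\sigma_3^d=x^{-d}\sigma_1^d$, which uses only that $\sigma_1$ commutes with $x$. This is a slicker way to produce the primitive power relations than the paper's, which extracts $(g.x)^{-d}$ as the commutator of $g\sigma_1^dg^{-1}$ with $g.y$. It also matches the vanishing-cycle picture of \cref{sec:topology_story}, since it shows the relation for $\sigma_3$ is what kills $x^d$.
\item Your reverse inclusion, that each $g\sigma_1^dg^{-1}$ acts trivially on $F/N_F$, is the same computation as the paper's.
\end{itemize}
The paper's lemma buys generality: it computes the kernel for any $N\triangleleft Q$ without knowing it in advance. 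Your route is more direct in this case, but it needs the answer guessed beforehand and a hand check that $N_F\cdot\llangle\sigma_1^d\rrangle_{\Br_3}$ is normal.

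Two small points.
\begin{itemize}
\item The inner-twist worry in Step 2 is moot. Conjugation by $\sigma_1$ is an honest automorphism of $F$ fixing $x$ on the nose. A direct check gives $\sigma_1 y\sigma_1^{-1}=yx^{-1}$ in your basis; your $x,y$ are the inverses of the paper's, for which $\phi(\sigma_1)(y)=x^{-1}y$.
\item In your closing paragraph, do not aim to show that the image of $\Br_3$ in $\Aut(F_2)$ is the full preimage of $\operatorname{SL}(2,\mathbb Z)$. That is false: the image meets $\operatorname{Inn}(F_2)$ only in the cyclic image of $\langle(\sigma_1\sigma_2)^6\rangle$. What you need, and what Step 1 actually uses, is that the image in $\operatorname{Out}(F_2)\cong\GL(2,\mathbb Z)$ is $\operatorname{SL}(2,\mathbb Z)$. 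Combine this with Nielsen's theorem that the kernel of $\Aut(F_2)\to\GL(2,\mathbb Z)$ is exactly $\operatorname{Inn}(F_2)$.
\end{itemize}
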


Our methods are purely of combinatorial group theory, but they come from topological intuition as explained in \cref{sec:topology_story}.

The truncated braid groups are well understood as examples of complex reflection groups. In particular, their orders are known. This yields the following corollary concerning the orders of some primitive Burnside groups.

\begin{corollary}\label{cor:primitive_Burnside_orders}
    We have the following identifications:
    \begin{itemize}
        \item $PF_2(2) \approx F_2(2)$ of order 4
        \item $PF_2(3) \approx F_2(3)$ of order 27
        \item $PF_2(4)$ and $PF_2(5)$ are both infinite groups
    \end{itemize}
\end{corollary}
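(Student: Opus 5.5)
By \cref{thm:kernel_is_primitive_burnside}, $\Br_4(d) \approx PF_2(d) \rtimes \Br_3(d)$ for every $d$. Hence whenever $\Br_4(d)$ and $\Br_3(d)$ are finite,
\begin{equation*}
    |PF_2(d)| = \frac{|\Br_4(d)|}{|\Br_3(d)|}.
\end{equation*}
Whenever $\Br_3(d)$ is finite but $\Br_4(d)$ is infinite, $PF_2(d)$ is infinite, since a finite-by-finite group is finite. So the plan is to feed known facts about truncated braid groups into the theorem, and then compare with free Burnside groups in the finite cases.

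\textbf{Case $d=2$.} Here $\Br_n(2)$ is the symmetric group $S_n$, because adding $\sigma_i^2=1$ to the braid relations gives the Coxeter presentation. The quotient is therefore $24/6=4$, and the kernel is the Klein four-group $V_4 \approx F_2(2)$. Independently, $PF_2(2)$ surjects onto $F_2(2)$, since $F_2(2)$ satisfies every exponent-$2$ law. A surjection between groups of order $4$ is an isomorphism, so $PF_2(2)\approx F_2(2)$.

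\textbf{Case $d=3$.} Coxeter's work \cite{Coxeter1959FactorGroup}, or the identification of $\Br_n(3)$ with Shephard--Todd complex reflection groups, gives:
\begin{itemize}
    \item $\Br_3(3)\approx \mathrm{SL}_2(\mathbb F_3)$, which is $G_4$, of order $24$;
    \item $\Br_4(3)$ is $G_{25}$, of order $648$.
\end{itemize}
The quotient is $648/24=27$. Again $PF_2(3)\twoheadrightarrow F_2(3)$, and $F_2(3)$ is the Heisenberg group of order $27$ (Burnside's classical computation). The two orders agree, so the surjection is an isomorphism.

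\textbf{Cases $d=4,5$.} Coxeter's theorem says $\Br_n(d)$ is finite exactly when $\frac1n+\frac1d>\frac12$. For $n=3$ this holds for $d\le 5$; in fact $\Br_3(4)$ is $G_8$ of order $96$ and $\Br_3(5)$ is $G_{16}$ of order $600$. For $n=4$ the inequality fails for $d=4,5$, so $\Br_4(4)$ and $\Br_4(5)$ are infinite. By the finite-by-finite observation, $PF_2(4)$ and $PF_2(5)$ are infinite.

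The main obstacle is the infiniteness of $\Br_4(4)$ and $\Br_4(5)$, since this rests on the "only if" direction of Coxeter's theorem. If I wanted to avoid citing it, I would try to exhibit an infinite quotient directly. One candidate is a representation of $\Br_4$, such as a specialization of Burau, in which $\sigma_1$ has order $d$ but whose image is infinite, for instance a lattice or triangle-group image. The finite orders I would simply quote from the Shephard--Todd tables.
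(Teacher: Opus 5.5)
Your proof is correct. For the orders and for the infinite cases $d=4,5$ it matches the paper: you read off $|PF_2(d)|$ as the index $|\Br_4(d)|/|\Br_3(d)|$ from the semidirect product, and you combine Coxeter's finiteness results with the fact that a finite-by-finite group is finite.

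You differ from the paper in how you identify $PF_2(d)\approx F_2(d)$ for $d=2,3$.

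\textbf{The paper's route.} It builds surjections in both directions, each the identity on generators. Besides $PF_2(d)\twoheadrightarrow F_2(d)$, it uses the presentations
$F_2(2)=\langle x,y\mid x^2,y^2,(xy)^2\rangle$ and
$F_2(3)=\langle x,y\mid x^3,y^3,(xy)^3,(xy^{-1})^3,(x^{-1}y)^3\rangle$.
Every relator there is a $d$-th power of a primitive word, so these relations hold in $PF_2(d)$ and give $F_2(d)\twoheadrightarrow PF_2(d)$.

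\textbf{Your route.} You use only the one-way surjection $PF_2(d)\twoheadrightarrow F_2(d)$ and then match orders:
$|PF_2(d)|=|\Br_4(d)|/|\Br_3(d)|=|F_2(d)|$.

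\textbf{What each buys.} Your version never needs to know that five primitive cube relations already force exponent $3$; the paper justifies that via the toroidal Cayley graph. It also reuses the braid-group orders you need anyway for $d=4,5$. The cost is that your isomorphism depends on the main theorem and on Coxeter's orders, together with Burnside's $|F_2(3)|=27$. The paper's isomorphism uses no braid groups at all. There, the braid-group count only supplies the orders, which then agree with the classical orders of $F_2(2)$ and $F_2(3)$ and so serve as a consistency check.
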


Note by contrast that $F_2(4)$ is finite. The group $F_2(5)$ is the lowest case in which the finiteness or infiniteness of a free Burnside group is unknown. Perhaps this knowledge of the associated primitive Burnside group could shed some light on this question.

\subsection*{Acknowledgments}
Thanks to Eduardo Reyes for topological discussions that led to the semidirect product description of \cref{prop:braid_4_semidirect}. Thanks to Seraphina Lee, Faye Jackson, and Trent Lucas for discussions about the Lefschetz fibration interpretation of these results (as discussed in \cref{sec:topology_story}). Google's Gemini assisted drafting \cref{lemma:quotient_of_split_sequence}.

\section{Proofs}\label{sec:proofs}
The main tool for proving \cref{thm:kernel_is_primitive_burnside} is a semidirect product decomposition of $\Br_4$ coming from a section of $\psi$. First, we define the relevant action. Write $F_2 = \langle x,y \rangle$ for the free group of rank 2. Then
\begin{definition}\label{def:modular_action}
    The \textit{modular action} of the 3-strand braid group is the homomorphism $\phi:\Br_3 \to \Aut(F_2)$ given by
    \begin{equation*}
        \phi(\sigma_1) = \begin{cases}
            x \mapsto x\\
            y \mapsto x^{-1}y
        \end{cases}, \quad 
        \phi(\sigma_2) = \begin{cases}
            x \mapsto y\\
            y \mapsto yx^{-1}y
        \end{cases}
    \end{equation*}
\end{definition}
One can readily check that this is a well defined action by automorphisms, i.e. that these maps have inverses and satisfy the braid relation $\phi(\sigma_1\sigma_2\sigma_1)=\phi(\sigma_2\sigma_1\sigma_2)$.

\begin{lemma}\label{lemma:action_on_primitive}
    The braid group acts transitively on conjugacy classes of primitive words in $F_2$ via the modular action.
\end{lemma}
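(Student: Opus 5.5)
Our plan is to reduce the statement to the classical description of primitive elements of $F_2$ via abelianization, together with the surjectivity of $\Br_3 \to \mathrm{SL}_2(\mathbb{Z})$.

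\textbf{Step 1: primitive classes up to conjugacy.} We use Nielsen's classical theorem that the natural map $\Out(F_2) \to \mathrm{GL}_2(\mathbb{Z})$ is an isomorphism. Two consequences follow. First, a primitive word $w$ has abelianization a primitive vector $(a,b)\in\mathbb{Z}^2$, meaning $\gcd(a,b)=1$. Second, two primitive words are conjugate if and only if they have the same abelianization. To see the second point, extend $w$ and $w'$ to bases $\{w,u\}$ and $\{w',u'\}$. Since their images in $\mathbb{Z}^2$ are bases with equal first vectors, we may replace $u'$ by $u'w'^k$ and possibly invert it so that the automorphism $w\mapsto w'$, $u\mapsto u'$ acts trivially on $\mathbb{Z}^2$. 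This automorphism is then inner, so $w'$ is conjugate to $w$. Hence conjugacy classes of primitive words biject with primitive vectors $(a,b)$ up to sign. The sign survives because $x$ and $x^{-1}$ are not conjugate.

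\textbf{Step 2: the induced action on $\mathbb{Z}^2$.} Abelianizing $\phi$, in coordinates $(a,b)$ for $x^ay^b$, the generator $\sigma_1$ sends $x\mapsto x$ and $y\mapsto x^{-1}y$. It therefore acts by the matrix with columns $(1,0)$ and $(-1,1)$, namely $\begin{pmatrix}1&-1\\0&1\end{pmatrix}$. The generator $\sigma_2$ sends $x\mapsto y$ and $y\mapsto yx^{-1}y$, so it acts by $\begin{pmatrix}0&-1\\1&2\end{pmatrix}$. We then check that these two matrices generate $\mathrm{SL}_2(\mathbb{Z})$. Writing $A$ for the first and $B$ for the second, $A^{-1}B=\begin{pmatrix}1&1\\1&2\end{pmatrix}$, and one can compute that $BA^{-1} = \begin{pmatrix}0&-1\\1&3\end{pmatrix}$. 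A cleaner route is to verify that the products $\sigma_1\sigma_2\sigma_1$ and $\sigma_1\sigma_2$ map to elements of order $4$ and $6$ respectively, or to exhibit both standard elementary matrices. For example, $A$ is elementary, and $B A^{-2}$ or a similar short word should give a lower-triangular unipotent; this routine computation is the part we would work out carefully. Since $\mathrm{SL}_2(\mathbb{Z})$ acts transitively on primitive vectors, and $-I$ is in the image, it acts transitively even on signed primitive vectors.

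\textbf{Step 3: conclusion.} Let $w$ be any primitive word. We choose $\beta\in\Br_3$ whose matrix sends $(1,0)$ to the abelianization of $w$. Then $\phi(\beta)(x)$ is primitive, being the image of a basis element under an automorphism, and it has the same abelianization as $w$. By Step 1 it is conjugate to $w$. Hence every primitive conjugacy class lies in the orbit of the class of $x$.

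The main obstacle is Step 1, which relies on Nielsen's theorem that $\mathrm{IA}(F_2)=\mathrm{Inn}(F_2)$. We would cite this theorem rather than reprove it. The generation check in Step 2 is a finite computation.
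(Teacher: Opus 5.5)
Your overall strategy matches the paper's. You abelianize, invoke Nielsen's theorem that the kernel of $\Aut(F_2)\to\GL(2,\mathbb Z)$ is $\operatorname{Inn}(F_2)$, and use that the image of $\Br_3$ is all of $\operatorname{SL}(2,\mathbb Z)$. The paper takes an automorphism $f$ with $f(u)=v$, normalizes it to determinant $1$, lifts $[f]$ to some $\phi(g)$, and compares the two modulo inner automorphisms. You instead first show that a primitive conjugacy class is determined by its abelianization, and then only need transitivity on primitive vectors. That repackaging is fine. Two small remarks. In Step 1, ``primitive vectors up to sign'' should read ``primitive vectors, sign included,'' as your next sentence says. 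Step 3 only uses the injectivity, and it is correct.

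The gap is Step 2. This is the one substantive computation, you leave it undone, and neither shortcut you offer works as stated.

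\begin{itemize}
\item Knowing that $ABA$ and $AB$ have orders $4$ and $6$ does not show they generate $\operatorname{SL}(2,\mathbb Z)$. For instance, take the two generators of a dicyclic subgroup of order $12$ in $\operatorname{SL}(2,\mathbb F_5)$ and lift them to finite-order elements of $\operatorname{SL}(2,\mathbb Z)$. The lifts have orders $4$ and $6$, but the subgroup they generate reduces mod $5$ into a group of order $12$, so it is proper.
\item Your candidate $BA^{-2}=\begin{pmatrix}0&-1\\1&4\end{pmatrix}$ is not unipotent.
\end{itemize}

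The word you want is the conjugate $A^{-1}BA=\begin{pmatrix}1&0\\1&1\end{pmatrix}$, which is the image of $\sigma_1^{-1}\sigma_2\sigma_1$. Together with $A^{-1}=\begin{pmatrix}1&1\\0&1\end{pmatrix}$, this gives the two standard elementary generators of $\operatorname{SL}(2,\mathbb Z)$. The paper equivalently computes $A^{-1}BABA=\begin{pmatrix}0&-1\\1&0\end{pmatrix}$ and uses it with $A$. With that line supplied, your argument is complete.
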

\begin{proof}

    
    Let $u,v \in F_2$ be primitive words. Then there exists some $f \in \Aut(F_2)$ with $f(u)=v$. Considering elements of the group up to conjugacy suggests that we might look at their images in the abelianization $F_2^{ab} \approx \mathbb Z^2$. The group elements map to primitive vectors $[u],[v] \in \mathbb Z^2$, and the induced map $\Aut(F_2) \to \GL(2,\mathbb Z)$ maps the automorphism to a linear transformation $[f] \in \GL(2,\mathbb Z)$. Without loss of generality, e.g. by inverting the value of $f$ on a second element of a basis with $u$, we assume that $[f] \in \operatorname{SL}(2,\mathbb Z)$.

    Now consider the induced map on the automorphisms of the modular action. We see that $\phi(\sigma_1)$ and $\phi(\sigma_2)$ map to the matrices
    \begin{equation*}
        A = \begin{pmatrix}
            1 & -1 \\ 0 & 1 
        \end{pmatrix} \quad \text{and} \quad B = \begin{pmatrix}
            0 & -1 \\ 1 & 2
        \end{pmatrix}
    \end{equation*}
    respectively. A quick computation gives
    \begin{equation*}
        C := A^{-1}BA BA = \begin{pmatrix}
            0 & -1 \\ 1 & 0
        \end{pmatrix},
    \end{equation*}
    so we see that $A$ and $C$ generate all of $\operatorname{SL}(2,\mathbb Z)$. Therefore there is an element $g \in \Br_3$ such that $[\phi(g)] = [f] \in \operatorname{SL}(2,\mathbb Z)$. Nielsen \cite{Nielsen1917DieErzeugenden} showed that the kernel of the map $\Aut(F_2) \to \GL(2,\mathbb Z)$ is exactly the subgroup of inner automorphisms, so $\phi(g)$ and $f$ differ by a conjugation. Therefore, $\phi(g)$ maps the conjugacy class of $u$ to that of $v$.
\end{proof}

Now that we have defined and analyzed the modular action, we can form the associated semidirect product group
\begin{equation*}
    F_2 \rtimes_\phi \Br_3 = \langle x,y,\sigma_1,\sigma_2 \mid \sigma_1\sigma_2\sigma_1=\sigma_2\sigma_1\sigma_2, \sigma_iw\sigma_i^{-1} = \phi(\sigma_i)(w) \rangle_{w \in \{x,y\},i \in \{1,2\}}.
\end{equation*}
This yields a presentation for $\Br_4$.

\begin{proposition}\label{prop:braid_4_semidirect}
    There is an isomorphism $\Br_4 \approx F_2 \rtimes_\phi \Br_3$. 
\end{proposition}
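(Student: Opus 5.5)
We will build explicit homomorphisms in both directions, $\Phi : F_2 \rtimes_\phi \Br_3 \to \Br_4$ and $\Psi : \Br_4 \to F_2 \rtimes_\phi \Br_3$, verify the defining relations each time, and check that the two composites are the identity on generators. The kernel $\llangle \sigma_1\sigma_3^{-1}\rrangle$ of $\psi$ guides the choice. We send $\sigma_1,\sigma_2 \in \Br_3$ to the standard $\sigma_1,\sigma_2 \in \Br_4$; this realizes the splitting of \eqref{eqn:classis_SES}. We send $x,y$ to elements of the free kernel, trying
\begin{equation*}
    \Phi(x) = \sigma_3\sigma_1^{-1}, \qquad \Phi(y) = \sigma_2\sigma_3\sigma_1^{-1}\sigma_2^{-1}
\end{equation*}
and adjusting inverses or conjugates as needed.

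The signs are forced by the relation $\sigma_1 y \sigma_1^{-1} = x^{-1}y$. The first check is that $\sigma_1$ commutes with $\Phi(x)$, which is immediate since $\sigma_1$ and $\sigma_3$ commute. Next we must have $\sigma_1\Phi(y)\sigma_1^{-1} = \Phi(x)^{-1}\Phi(y)$, and then the two relations for $\sigma_2$. Each is a short braid-word computation using $\sigma_1\sigma_3=\sigma_3\sigma_1$ and the braid relations. For instance, $\sigma_2\Phi(x)\sigma_2^{-1} = \Phi(y)$ holds by definition. The last relation, $\sigma_2 \Phi(y)\sigma_2^{-1} = \Phi(y)\Phi(x)^{-1}\Phi(y)$, is the one I expect to require real juggling. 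If the guess fails, I will swap $x$ for $x^{-1}$ or replace $\sigma_3\sigma_1^{-1}$ by $\sigma_1^{-1}\sigma_3$ until the relations hold.

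For the inverse map we use the standard presentation of $\Br_4$. Define
\begin{equation*}
    \Psi(\sigma_1)=\sigma_1, \qquad \Psi(\sigma_2)=\sigma_2, \qquad \Psi(\sigma_3) = x\sigma_1,
\end{equation*}
with the order of the factors matched to $\Phi(x)=\sigma_3\sigma_1^{-1}$. We then verify three relations in the semidirect product. First, $\sigma_1$ commutes with $x\sigma_1$, since $\sigma_1$ fixes $x$. Second, the braid relation between $\sigma_1$ and $\sigma_2$ holds trivially. Third, we need $\sigma_2\,x\sigma_1\,\sigma_2 = x\sigma_1\,\sigma_2\,x\sigma_1$. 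For this, move all the $\Br_3$ letters to the right using $\sigma_i w = \phi(\sigma_i)(w)\sigma_i$. Both sides then become an element of $F_2$ times $\sigma_1\sigma_2\sigma_1$ or $\sigma_2\sigma_1\sigma_2$, which are equal, and we compare the $F_2$ parts using \cref{def:modular_action}.

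Finally, the two composites fix every generator: $\Psi\Phi(x) = x\sigma_1\sigma_1^{-1}=x$, and $\Psi\Phi(y)=y$ follows from the $\sigma_2$ relation; similarly $\Phi\Psi(\sigma_3)=\sigma_3$. So $\Phi$ and $\Psi$ are mutually inverse isomorphisms. Nothing beyond these checks is needed, not even the freeness result of Gassner and Gorin. The main obstacle is the bookkeeping in choosing the correct images of $x,y$ so that the $\sigma_2$ relations match $\phi$ exactly.
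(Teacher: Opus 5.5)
Your maps are exactly the paper's ($\sigma_3\sigma_1^{-1}=\sigma_1^{-1}\sigma_3$ in $\Br_4$, and $x\sigma_1=\sigma_1x$ in the semidirect product because $\phi(\sigma_1)$ fixes $x$), and your first guess already satisfies every relation with no sign adjustments, so this is the paper's argument, which likewise leaves the relation checks as a routine exercise. One shortcut: the relation $\sigma_2\Phi(y)\sigma_2^{-1}=\Phi(y)\Phi(x)^{-1}\Phi(y)$ that you expected to be hardest comes for free, since conjugating $\Phi(x)$ by $\sigma_1\sigma_2\sigma_1$ and by $\sigma_2\sigma_1\sigma_2$ must agree, and the other three relations evaluate these conjugates to $\Phi(x)^{-1}\Phi(y)$ and $\Phi(y)^{-1}\,\sigma_2\Phi(y)\sigma_2^{-1}$ respectively.
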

\begin{remark}
    Note that this proposition yields the $d=\infty$ case of \cref{thm:kernel_is_primitive_burnside}, in which case we view the torsion relations as empty.
\end{remark}
\begin{proof}[Proof of \cref{prop:braid_4_semidirect}]
    Define a map $F_2 \rtimes_{\phi} \Br_3 \to \Br_4$ by $x \mapsto \sigma_1^{-1}\sigma_3$, $y \mapsto \sigma_2(\sigma_1^{-1}\sigma_3)\sigma_2^{-1}$, and $\sigma_i \mapsto \sigma_i$. (The last part is the section of $\psi:\Br_4 \to \Br_3$.)
    
    The inverse map is $\Br_4 \to F_2 \rtimes_\phi \Br_3$ via $\sigma_1 \mapsto \sigma_1$, $\sigma_2 \mapsto \sigma_2$, and $\sigma_3 \mapsto \sigma_1 x$.
    That these two maps give well defined homomorphisms is a tedious exercise in applying group relations. Given this, it is clear that the two maps are inverses.
\end{proof}

With this identification, the map $\psi:\Br_4 \to \Br_3$ becomes the projection onto the second factor of the semidirect product. To prove \cref{thm:kernel_is_primitive_burnside}, we must identify the kernel of this projection after quotienting by the normal subgroup generated by $\sigma_1^d$. This follows from the following group theoretic lemma.

\begin{lemma}[Quotients of semidirect products]\label{lemma:quotient_of_split_sequence}
    Suppose that
    \begin{equation*}
        1 \to K \to G \xrightarrow{\psi} Q \to 1
    \end{equation*}
    is a split short exact sequence of groups. Let $N \triangleleft Q$ be a normal subgroup of $Q$. Then the natural quotient map $\bar\psi: G/\llangle N \rrangle \to Q/N$ has kernel isomorphic to $K/[K,N]$ where $[K,N] = \langle knk^{-1}n^{-1} \rangle_{k \in K, n \in N}$ is the mixed commutator subgroup. In addition, the short exact sequence
    \begin{equation*}
        1 \to K/[K,N] \to G/\llangle N \rrangle \xrightarrow{\bar\psi} Q/N \to 1
    \end{equation*}
    is split in the clear way.
\end{lemma}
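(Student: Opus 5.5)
Fix a section $s: Q \to G$ and identify $G = K \rtimes Q$, where $Q$ acts on $K$ by $q\cdot k = s(q)ks(q)^{-1}$. Here $\llangle N \rrangle$ means the normal closure of $s(N)$ in $G$. The plan is to compute $\llangle N \rrangle$ explicitly, and I claim it equals $[K,N]\,s(N)$, viewed inside $K\rtimes Q$ as the set of pairs $(k,n)$ with $k\in[K,N]$ and $n\in N$.

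First I check that $[K,N]$ is normal in $G$. For $q\in Q$, conjugating a generator gives $s(q)[k,s(n)]s(q)^{-1} = [q\cdot k, s(qnq^{-1})]$, which is again a generator because $N\triangleleft Q$. For $k'\in K$, I use the identity $k'[k,n]k'^{-1} = [k'k,n][k',n]^{-1}$, which is the standard commutator identity $[ab,c]=a[b,c]a^{-1}[a,c]$. So $[K,N]$ is normal in $G$.

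Next, $s(N)$ normalizes $[K,N]$, so $M := [K,N]s(N)$ is a subgroup. It is normal in $G$ for two reasons. Conjugating by $s(q)$ preserves both factors. Conjugating by $k\in K$ sends $s(n)$ to $k s(n) k^{-1} = [k,s(n)]\,s(n)\in M$. Moreover $M$ is contained in $\llangle N\rrangle$: clearly $s(N)\subseteq\llangle N\rrangle$, and each $[k,s(n)]$ is the product of $ks(n)k^{-1}$ and $s(n)^{-1}$, both of which lie in $\llangle N\rrangle$. Since $M$ is a normal subgroup containing $s(N)$, we get $M=\llangle N\rrangle$.

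Now compute the kernel of $\bar\psi$. Since $\psi(\llangle N\rrangle)=N$, we have $\ker\bar\psi = \psi^{-1}(N)/\llangle N\rrangle = K s(N)/([K,N]s(N))$. The inclusion $K \to Ks(N)$ induces a map $K/[K,N] \to Ks(N)/M$.
\begin{itemize}
\item It is surjective because every element of $Ks(N)$ has the form $k\,s(n)$ with $s(n)\in M$.
\item It is injective because $K\cap M = [K,N]$. Indeed, if $k'\,s(n)\in K$ with $k'\in[K,N]$, then applying $\psi$ gives $n=1$.
\end{itemize}
Writing elements uniquely as pairs in $K\rtimes Q$ is what makes the intersection computation clean. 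This identification of $\llangle N\rrangle$, and specifically the normality under conjugation by $K$, is the only real point of the argument. Everything else is bookkeeping.

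For the splitting, $s$ maps $N$ into $\llangle N\rrangle$, so it descends to a homomorphism $\bar s: Q/N \to G/\llangle N\rrangle$. Then $\bar\psi\circ\bar s=\mathrm{id}$, because $\psi\circ s=\mathrm{id}$.

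Equivalently, $G/\llangle N\rrangle \cong (K/[K,N])\rtimes (Q/N)$. The action of $Q$ on $K/[K,N]$ factors through $Q/N$, because $n\cdot k \equiv k$ modulo $[K,N]$.
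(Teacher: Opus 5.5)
Your proposal is correct and follows essentially the same route as the paper: identify $\llangle N \rrangle$ with $[K,N]N$, then show $KN/([K,N]N)\cong K/[K,N]$. You get the last isomorphism from the inclusion $K\to KN/M$ together with $K\cap M=[K,N]$, while the paper uses the map $kn\mapsto k[K,N]$ and the first isomorphism theorem. Your version is slightly more careful: you check explicitly that $[K,N]N$ is a subgroup normal in $G$, which the paper's ``products of conjugates'' step implicitly needs, and you verify the splitting that the paper leaves as clear.
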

\begin{proof}
    Write $\psi':G \to Q/N$ for the quotient of the original map $\psi$. The splitting gives a unique way to write elements of $G$ as products $kq$ for some $k \in K$ and $q \in Q$, and then $\psi'(kq)=qN$. Therefore $\ker \psi' = \{kq \mid k \in K, q \in N\} = KN$.

    Since $\psi'(\llangle N \rrangle)$ is trivial in $Q/N$ (by normality of $N$), we have $\llangle N \rrangle \triangleleft KN$. Now $\psi'$ descends to the map $\bar \psi : G/\llangle N \rrangle \to Q/N$ in question. It follows then that $\ker(\bar \psi) = KN/\llangle N \rrangle$.

    Elements of $\llangle N \rrangle$ are products of elements of the form
    \begin{equation*}
        (kq)n(kq)^{-1} = k(qnq^{-1})k^{-1}(qnq^{-1})^{-1}(qnq^{-1}) \in [K,N]N.
    \end{equation*}    
    It also clear that $[K,N]N \leq \llangle N \rrangle$, so that $\llangle N \rrangle = [K,N]N$. So now we have
    \begin{equation*}
        \ker(\bar \psi) = KN/([K,N]N).
    \end{equation*}

    Since $N$ normalizes $K$, it follows that $[K,N] \leq K$. Furthermore, this subgroup is normal in $K$ by the identity
    \begin{equation*}
        \bar k [k,n]\bar k^{-1} = [\bar k k,n][\bar k,n]^{-1} \in [K,N].
    \end{equation*}
    So we finally define a map $KN \to K/[K,N]$ by $kn \mapsto k[K,N]$. This is a homomorphism because
     \begin{align*}
         (k_1n_1)(k_2n_2) = k_1(n_1&k_2n_1^{-1})n_1n_2\\ & \mapsto k_1n_1k_2n_1^{-1}[K,N] = k_1k_2[k_2^{-1},n_1][K,N] = k_1k_2[K,N].
     \end{align*}
     The homomorphism is clearly surjective, and its kernel is $\{kn \mid k \in [K,N]\} = [K,N]N$. The first isomorphism theorem finishes the proof.
\end{proof}

\begin{proof}[Proof of \cref{thm:kernel_is_primitive_burnside}]
    \cref{prop:braid_4_semidirect} allows us to apply \cref{lemma:quotient_of_split_sequence} with $K=F_2$, $G=\Br_4$, $Q=\Br_3$ and $N = \langle g\sigma_1^dg^{-1} \mid g \in \Br_3 \rangle$. For convenience, we will write $g .w = \phi(g)(w)$ for $g \in \Br_3$ and $w \in F_2$. Then we obtain the kernel of $\psi:\Br_4(d) \to \Br_3(d)$ as
    \begin{align*}
        F_2/\langle wn^dw^{-1}n^{-d} \rangle_{w \in F_2, n \in N} & = \langle x,y \mid w = (g\sigma_1^dg^{-1})w(g\sigma_1^dg^{-1})^{-1} \rangle_{w \in F_2, g \in \Br_3}\\
        & = \langle x,y \mid w = (g\sigma_1^dg^{-1}).w \rangle_{w \in F_2, g \in \Br_3}.
    \end{align*}
    Note here that this conjugation relation need only be recorded for each $g \sigma_1^d g^{-1}$; the relation follows in turn for products of such elements of $N$.
    
    Note also that we need not include such relations for each $w \in F_2$
    . If for each conjugate in $\Br_3$, we record the relation for a pair of words in a suitable basis for $F_2$, then the relation follows for all elements of the free group. Concretely, for each $g \in \Br_3$ the pair $\{g.x,g.y\}$ forms a basis for $F_2$. So we have further that the kernel is presented as
    \begin{align*}
        F_2/\langle wn^dw^{-1}n^{-d} \rangle_{w \in F_2, n \in N} & = \langle x,y \mid g.x = (g\sigma_1^dg^{-1}).(g.x), g.y=(g\sigma_1^dg^{-1}).(g.y) \rangle_{g \in \Br_3}\\
        \qquad 
        & = \langle x,y \mid g.x=g.x, g.y=(g.x)^{-d}(g.y) \rangle_{g \in \Br_3}\\
        &= \langle x,y \mid 1=(g.x)^d \rangle_{g \in \Br_3}.
    \end{align*}
    Finally by \cref{lemma:action_on_primitive}, the orbit $\{g.x\}_{g \in \Br_3}$ covers exactly the set of primitive conjugacy classes in $F_2$. This yields the primitive Burnside presentation.
\end{proof}

\begin{proof}[Proof of \cref{cor:primitive_Burnside_orders}]
    We clearly have quotient maps from the primitive Burnside groups to the free Burnside groups: $PF_n(d) \twoheadrightarrow F_n(d)$. The first few instances of free Burnside groups already have finite presentations with power relations on primitive elements:
    \begin{align*}
        F_2(2) &= \langle x,y \mid x^2=y^2=(xy)^2=1 \rangle\\
        F_2(3) &= \langle x,y \mid x^3=y^3=(xy)^3=(xy^{-1})^3=(x^{-1}y)^3=1 \rangle
    \end{align*}
    Thus we also have quotient maps $F_2(d) \twoheadrightarrow PF_2(d)$ for $d=2,3$. The isomorphisms follow.

    \begin{figure}
        \centering

        \begin{overpic}[
            page=1, scale=0.8,
            viewport={0.3in 8.5in 4.0in 10.7in},
            clip, grid=false
        ]{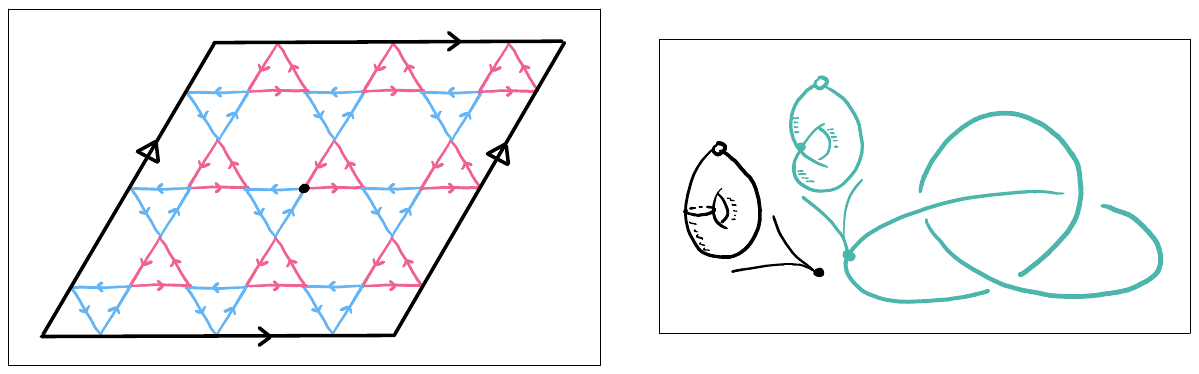}
        \end{overpic}
        \caption[Cayley graph of free Burnside group of rank 2 and exponent 3 in the torus]{A Cayley graph for a group all of whose elements have order 3, embedded in a torus $T^2$. This group is a quotient of $F_2(3)$, and it has order $27=\#F_2(3)$, so it must be isomoprhic to $F_2(3)$. The relations can be read off from the cells in the obvious 2-complex and from generators for $\pi_1(T^2)$.}
        \label{fig:Burnside_2_3}
    \end{figure}
    
    For the statements about the orders of the groups, we turn to Coxeter's results about truncated braid groups \cite{Coxeter1959FactorGroup}. Coxeter showed in particular that
    \begin{align*}
        \#\Br_3(2) &= 6 & \#\Br_4(2) &= 24\\
        \#\Br_3(3) &= 24 & \#\Br_4(3) &= 648\\
        \#\Br_3(4) &< \infty & \#\Br_4(4) &= \infty\\
        \#\Br_3(5) &< \infty & \#\Br_4(5) &= \infty\\
    \end{align*}
    Then the index of $PF_2(d)$ in $\Br_4(d)$ is given by $\#\Br_3(d)$.
\end{proof}

\section{A topological interpretation}\label{sec:topology_story}
The ideas in this section are shared without proof, but we think they are useful to share nonetheless. The map $\psi:\Br_4 \to \Br_3$ is not purely group theoretic. It is the map on $\pi_1$ induced by a map of configuration spaces $\operatorname{Conf}_4(\mathbb C) \to \operatorname{Conf}_3(\mathbb C)$ given as $\{a,b,c,d\} \mapsto \{ab+cd,ac+bd,ad+bc\}$. See \cite{Chen2025FamiliesStrands} for more discussion of this map and its relatives. This map of configuration spaces turns out to be a fiber bundle with fiber the once-punctured torus $S_{1,1}$. The long exact sequence on homotopy groups yields the short exact sequence \eqref{eqn:classis_SES}.

To obtain the analogous map on truncated braid groups, one must perform a sort of orbifold filling on the configuration spaces. For $\operatorname{Conf}_3(\mathbb C)$, which is homotopy equivalent to the trefoil knot complement in the 3-sphere, this is rather direct. See e.g. my earlier article \cite{Dlugie2025TheTorsion}. Call the filled space $\operatorname{Conf}_3[d]$.

For $\operatorname{Conf}_4(\mathbb C)$, one has to be a bit careful. The obvious compactification of configuration space needs to be blown up along the part corresponding to triple collisions of points, along the lines of \cite{Fulton1994ASpaces}. After doing this, orbifold surgery should be more direct, yielding a space $\operatorname{Conf}_4[d]$. The map on orbifolds is no longer a fiber bundle but should be thought of as an \textit{orbifold Lefschetz fibration}, since the fibers over the singular part of $\operatorname{Conf}_3[d]$ are pinched surfaces and have nontrivial ambient orbifold isotropy. The cartoon is something like \cref{fig:orbifold_Lefschetz}.

\begin{figure}
        \centering

        \begin{overpic}[
            page=1,
            viewport={4.6in 8.7in 8.0in 10.5in},
            clip, grid=false
        ]{figures.pdf}
        \end{overpic}
        \caption[Cartoon of the orbifold Lefschetz fibration]{The space $\operatorname{Conf}_3[d]$ is the 3-sphere with a $\mathbb Z/d$ orbifold locus along a trefoil knot. The space $\operatorname{Conf}_4[d]$ is a $S_{1,1}$ bundle on this space away from the knot, while the fibers collapse to pinched surfaces along the knot. The pinch points correspond to 4-point configurations with two double collisions.}
        \label{fig:orbifold_Lefschetz}
    \end{figure}

The typical short exact sequence of Lefschetz fibrations should have an orbifold version akin to
\begin{center}
    \begin{tikzcd}[row sep=small]
        1 \arrow[r] & {\pi_1(S_{1,1})/C^d} \arrow[r] & {\pi_1(\operatorname{Conf}_4[d])} \arrow[r] & {\pi_1(\operatorname{Conf}_3[d])} \arrow[r] & 1 \\
                    &                              & \Br_4(d) \arrow[u, "\approx"]     & \Br_3(d) \arrow[u, "\approx"]     &  
    \end{tikzcd}
\end{center}
where $C \subset \pi_1(S_{1,1})$ denotes the set of so-called vanishing cycles, a particular set of simple closed curves in the fiber along with their orbits under the monodromy action. In this case, the monodromy action $\Br_3 \curvearrowright \pi_1(S_{1,1})$ is given by \cref{def:modular_action}, and $C$ consists of all simple closed curves in $S_{1,1}$ (which are the same as primitive elements of $F_2$).

This topological interpretation led me to discover the main result of this work. I find the topological story more enlightening, but I felt that the group theory was more direct than trying to develop a theory of orbifold Lefschetz fibrations. Perhaps the development of such a theory could come in future work.
\printbibliography

\end{document}